\documentclass[leqno,12pt]{article} 
\setlength{\textheight}{23cm} \setlength{\textwidth}{16cm}
\setlength{\oddsidemargin}{0cm} \setlength{\evensidemargin}{0cm}
\setlength{\topmargin}{0cm}
\usepackage{amsmath, amssymb}
\usepackage{amsthm} 
\usepackage{pictexwd,dcpic}
\usepackage{xfrac}
\usepackage{xypic}

%
%
%
\theoremstyle{plain} 
\newtheorem{theorem}{\indent\sc Theorem}[section]
\newtheorem{lemma}[theorem]{\indent\sc Lemma}
\newtheorem{corollary}[theorem]{\indent\sc Corollary}
\newtheorem{proposition}[theorem]{\indent\sc Proposition}

\newtheorem{conjecture}[theorem]{\indent\sc Conjecture}

\theoremstyle{definition} 

\newtheorem{remark}[theorem]{\indent\sc Remark}
\newtheorem{example}[theorem]{\indent\sc Example}
\newtheorem{notation}[theorem]{\indent\sc Notation}

%

%
\numberwithin{equation}{section}

\def\O{\omega}

\def\P{\psi}

\def\o{\mathcal{O}}
\def\F{\mathfrak{f}}

\makeatletter
\def\address#1#2{\begingroup
\noindent\parbox[t]{7.8cm}{%
\small{\scshape\ignorespaces#1}\par\vskip1ex
\noindent\small{\itshape E-mail address}%
\/: #2\par\vskip4ex}\hfill%
\endgroup}%

\makeatother
%
\title{{On the Schertz conjecture}} 
\author{
\textsc{Ja Kyung Koo and Dong Sung Yoon$^*$} 
}

\date{} 
%

\begin{document}

\maketitle

\footnote{ 
2010 \textit{Mathematics Subject Classification}. 11R37 (primary), 11G16  (secondary). }
\footnote{ 
\textit{Key words and phrases}. class field theory, modular units} 
\footnote{$^*$Corresponding author.}
\footnote{\thanks{
The second (corresponding) author was
supported by Basic Science Research Program through the National Research Foundation of
Korea(NRF) funded by the Ministry of Education (2017R1D1A1B03030015).
}}


\begin{abstract}
Schertz conjectured that every finite abelian extension of imaginary quadratic fields can be generated by 
the norm of the Siegel-Ramachandra invariants.
We shall present a conditional proof of his conjecture by means of the characters on class groups and the  second Kronecker limit formula.

\end{abstract}

\maketitle

\section{Introduction}

Let $K$ be an imaginary quadratic field, $\mathfrak{f}$ be a nonzero integral ideal of $K$ and 
$\mathrm{Cl}(\F)$ be the ray class group of $K$ modulo $\mathfrak{f}$.
Then there exists a unique abelian extension $K_\mathfrak{f}$ of $K$ whose Galois group is isomorphic to $\mathrm{Cl}(\F)$ via the Artin map
\begin{equation}\label{artin map}
\sigma_\mathfrak{f}:\mathrm{Cl}(\F)\xrightarrow{~\sim~} \mathrm{Gal}(K_{\F}/K),
\end{equation}
which is called the \textit{ray class field} of $K$ modulo $\mathfrak{f}$.
By class field theory any abelian extension of $K$ is contained in some ray class field $K_\mathfrak{f}$, and hence it is important to construct the ray class fields of $K$ to figure out the maximal abelian extension of $K$.
\par

In 1964, Ramachandra (\cite[Theorem 10]{Ramachandra}) constructed a primitive generator of $K_\F$ over $K$ in terms of certain elliptic unit and showed that arbitrary finite abelian extension of $K$ can be generated by the norm of this unit, which settled down the Kronecker's Jugendtraum over an imaginary quadratic field.
    However, his unit involves too complicated products of singular values of the Klein forms and the discriminant $\Delta$-function to use in practice.
On the other hand, Schertz (\cite[Theorem 6.8.4]{Schertz}) presented a relatively simple ray class invariant over $K$  by means of the singular value of certain Siegel function, namely, Siegel-Ramachandra invariant.
He further conjectured that every finite abelian extension of $K$ can be generated by 
the norm of the Siegel-Ramachandra invariant (\cite[Conjecture 6.8.3]{Schertz}) as follows:

\begin{conjecture}
Let $\F$ be a nonzero proper integral ideal of $K$ and let $L$ be a finite abelian extension of $K$ such that $K\subset L\subset K_{\F}$.
Then for every nonzero integer $n$ and $C\in\mathrm{Cl}(\mathfrak{f})$
\begin{equation*}
L=K(N_{K_{\F}/L}(g_\F(C)^{n})),
\end{equation*}
where $g_\F(C)$ is the Siegel-Ramachandra invariant of conductor $\F$ at $C$ defined in \textup{(\ref{invariant})}.
\end{conjecture}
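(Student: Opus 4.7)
The plan is to establish both inclusions; the easy direction is automatic from Galois theory, and the main direction proceeds by contradiction via a character-theoretic argument anchored on the second Kronecker limit formula.

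First, the inclusion $K(N_{K_\F/L}(g_\F(C)^n)) \subseteq L$ is immediate, since $N_{K_\F/L}(g_\F(C)^n)$ lies in the fixed field of $\mathrm{Gal}(K_\F/L)$, which is $L$.

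For the reverse inclusion, set $\beta = N_{K_\F/L}(g_\F(C)^n)$ and $L_0 = K(\beta)$, and suppose for contradiction that $L_0 \subsetneq L$. Via the Artin isomorphism (\ref{artin map}), $L$ and $L_0$ are the fixed fields of $\sigma_\F(H)$ and $\sigma_\F(H_0)$ for subgroups $H \subsetneq H_0$ of $\mathrm{Cl}(\F)$. I would choose a character $\chi$ of $\mathrm{Cl}(\F)$ that is trivial on $H$ but nontrivial on $H_0$. Unfolding the norm as $\beta = \prod_{A \in H} g_\F(CA)^n$, using the convention-appropriate Galois action of $\sigma_\F(B)$ on $g_\F$, I would then compute the character sum
$$S(\chi) := \sum_{B \in \mathrm{Cl}(\F)/H} \chi(B)\,\log|\beta^{\sigma_\F(B)}|.$$
A change of variables, together with the triviality of $\chi$ on $H$, collapses this to
$$S(\chi) = n\,\bar{\chi}(C)\sum_{D \in \mathrm{Cl}(\F)} \chi(D)\,\log|g_\F(D)|,$$
which by the second Kronecker limit formula is an explicit nonzero constant multiple of the partial Hecke L-value $L_\F(1,\bar\chi)$.

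On the other hand, the hypothesis $L_0 \subsetneq L$ forces $\beta$ to be fixed by $\sigma_\F(H_0)$, so the function $A \mapsto \log|\beta^{\sigma_\F(A)}|$ is constant on cosets of $H_0/H$. Because $\chi$ is nontrivial on that quotient, character orthogonality gives $S(\chi) = 0$, whence $L_\F(1,\bar\chi) = 0$. The main obstacle, and what I expect is the source of the conditionality announced in the abstract, is precisely to rule this out: for primitive characters one invokes the classical nonvanishing of Hecke L-functions at $s=1$, but if $\chi$ is imprimitive one must ensure that no Euler factor at a prime dividing $\F$ but not the conductor of $\chi$ makes $L_\F(1,\bar\chi)$ vanish, or else restrict the statement to configurations where this cannot occur. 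A secondary bookkeeping task is to pin down the exact constant in the second Kronecker formula, the normalization of $g_\F(C)$, and the Galois-action convention so that the two expressions for $S(\chi)$ can be equated without sign or factor ambiguity.
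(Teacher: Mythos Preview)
Your skeleton is the same as the paper's: assume $L_0\subsetneq L$, pick a character $\chi$ of $\mathrm{Cl}(\F)$ trivial on $\mathrm{Cl}(K_\F/L)$ but not on $\mathrm{Cl}(K_\F/L_0)$, and compute the Stickelberger element $S_\F(\overline\chi)$ in two ways to reach a contradiction via the second Kronecker limit formula. You have also correctly located the obstruction---the Euler factors $\prod_{\mathfrak{p}\mid\F,\ \mathfrak{p}\nmid\F_\chi}(1-\overline{\chi_0}([\mathfrak{p}]))$ may kill $S_\F(\overline\chi)$---and correctly guessed that this is where the conditionality enters.

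What your proposal leaves open, and what constitutes the paper's actual technical content, is \emph{how} to arrange $\chi$ so that this obstruction disappears. The paper does not simply ``restrict to configurations where this cannot occur''; it actively modifies $\chi$. For each prime $\mathfrak{p}\mid\F$ with $\mathfrak{p}\nmid\F_\chi$, one constructs (Lemma~2.4) an auxiliary character $\psi_\mathfrak{p}$ of $\mathrm{Cl}(\F)$, factoring through the local quotient $\mathbf{G}_\mathfrak{p}\cong\mathrm{Cl}(K_{\mathfrak{p}^{e_\mathfrak{p}}}/H_K)$, which is trivial on $\mathrm{Cl}(K_\F/L)$, has conductor divisible only by $\mathfrak{p}$, and is chosen so that $\chi\psi_\mathfrak{p}$ still separates the target class $D$. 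Replacing $\chi$ by $\chi\psi_\mathfrak{p}$ forces $\mathfrak{p}\mid\F_{\chi\psi_\mathfrak{p}}$ without disturbing the primes already in $\F_\chi$. This modification is possible precisely when $\mathbf{G}_\mathfrak{p}$ is large enough relative to $[K_\F:LH_K]$ (the hypothesis involving $\mathrm{ord}_{\nu_\mathfrak{p}}$). For the ``bad'' primes in $\mathbf{h}_{L,\F}$ where no such $\nu_\mathfrak{p}$ exists, the paper uses a counting argument (the comparison $|G_1|>|G_2|$ in the proof of Theorem~2.5) to show that under the numerical assumption~(\ref{assumption}) one can still choose the initial $\chi$ with $\mathfrak{p}\mid\F_\chi$ for all $\mathfrak{p}\in\mathbf{h}_{L,\F}$. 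Without this two-step mechanism your argument remains a heuristic; with it, one obtains the conditional Theorem~2.5, which is as far as the paper goes---the Conjecture itself is not proved unconditionally.
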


Recently, Koo-Yoon generated ray class fields $K_\F$ over $K$ via Siegel-Ramachandra invariants by making use of the characters on class groups and the  second Kronecker limit formula (\cite[Theorem 4.6]{K-Y}).
In this paper by improving their idea we shall give a conditional proof of the conjecture with certain assumption 
depending only on the extension degree $[K_{\F}:LH_K]$, where $H_K$ denotes the Hilbert class field of $K$ (Theorem \ref{singular value} and Example \ref{example}).

\begin{notation}
For $z\in\mathbb{C}$, we denote by $\overline{z}$ the complex conjugate of $z$.
If $G$ is a group and $g_1,g_2,\ldots,g_r$ are elements of $G$, let $\langle g_1,g_2,\ldots,g_r \rangle$ be the subgroup of $G$ generated by $g_1,g_2,\ldots,g_r$.
Moreover, if $H$ is a subgroup of $G$ and $g\in G$, by $[g]$ we mean  the coset $gH$ of $H$ in $G$.
For a number field $K$, let $\o_K$ be the ring of integers of $K$.
If $a\in\o_K$, we denote by $(a)$ the principal ideal of $K$ generated by $a$.
\end{notation}

\section{Main Theorem}\label{Main Theorem}

For a rational vector $\mathbf{r}=\left[\begin{matrix}r_1\\r_2\end{matrix}\right]\in\mathbb{Q}^2\setminus\mathbb{Z}^2$, we define the \textit{Siegel function} $g_{\mathbf{r}}(\tau)$ on the complex upper half plane $\mathbb{H}$ by the following infinite product
\begin{equation*}
g_{\mathbf{r}}(\tau)=-q^{\frac{1}{2}\mathbf{B}_2(r_1)}e^{\pi \mathrm{i} r_2(r_1-1)}(1-q^{r_1}e^{2\pi \mathrm{i} r_2})\prod_{n=1}^\infty(1-q^{n+r_1}e^{2\pi \mathrm{i} r_2})(1-q^{n-r_1}e^{-2\pi \mathrm{i} r_2}),
\end{equation*}
where $\mathbf{B}_2(X)=X^2-X+{1}/{6}$ is the second Bernoulli polynomial and $q=e^{2\pi \mathrm{i}\tau}$.
Then a Siegel function is a modular unit, namely, it is a modular function whose zeros and poles are supported     only at the cusps (\cite{Siegel} or \cite[p.36]{Kubert}).
In particular, if $\mathbf{r}\in({1}/{N})\mathbb{Z}^2\setminus\mathbb{Z}^2$ with an integer $N\geq 2$ then
the function $g_{\mathbf{r}}(\tau)^{12N}$ belongs to $\mathcal{F}_N$ (\cite[Proposition 1.1]{Jung}), where $\mathcal{F}_N$ is the field of meromorphic modular functions for the principal congruence subgroup $\Gamma(N)$  whose Fourier coefficients lie in the $N$th cyclotomic field $\mathbb{Q}(e^{2\pi \mathrm{i}/N})$.
\par

Let $K$ be an imaginary quadratic field of discriminant $d_K$,
$\mathfrak{f}$ be  a nonzero proper integral ideal of $K$ and $N$ be the smallest positive integer in $\mathfrak{f}$.
For $C\in\mathrm{Cl}(\F)$, we take any integral ideal $\mathfrak{c}$ in $C$ and choose a basis $[\omega_1,\omega_2]$ of $\mathfrak{f}\mathfrak{c}^{-1}$ such that ${\omega_1}/{\omega_2}\in\mathbb{H}$.
Then one can write
\begin{equation*}
N=r_1\omega_1+r_2\omega_2
\end{equation*}
for some $r_1,r_2\in\mathbb{Z}$.
We define the \textit{Siegel-Ramachandra invariant} of conductor $\F$ at $C$  by 
\begin{equation}\label{invariant}
g_\F(C)=g_{\left[\begin{smallmatrix}r_1/N\\r_2/N\end{smallmatrix}\right]}({\omega_1}/{\omega_2})^{12N}.
\end{equation}
This value depends only on the class $C$ and $\F$, not on the choice of $\mathfrak{c}$.

\begin{proposition}\label{Galois action}
Let $C, C'\in\mathrm{Cl}(\F)$ with $\F\neq \o_K$.
\begin{itemize}
\item[\textup{(i)}] $g_\F(C)$ belongs to $K_\F$ as an algebraic integer. 
If $N$ is composite, $g_\F(C)$ is a unit in $K_\F$.
\item[\textup{(ii)}]
We have the transformation formula
\begin{equation*}
g_\F(C)^{\sigma_\F(C')}=g_\F(CC'),
\end{equation*}
where $\sigma_\F$ is the Artin map stated in \textup{(\ref{artin map})}.
\end{itemize}
\end{proposition}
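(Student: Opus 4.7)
The plan is to derive both statements from the Shimura reciprocity law applied to the modular function $g_{\mathbf{r}}(\tau)^{12N} \in \mathcal{F}_N$, together with known properties of Siegel functions. First I would observe that $\tau = \omega_1/\omega_2$ is a CM point for $K$, since $[\omega_1,\omega_2]$ is a basis of the fractional ideal $\F\mathfrak{c}^{-1}$ in $K$.

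For part (i), the main theorem of complex multiplication implies that the value $f(\tau)$ of any $f \in \mathcal{F}_N$ at such a CM point lies in the ray class field $K_{(N)}$. A refinement of this argument keeping track of the stabilizer of $\mathbf{r}$ under the idelic action should pin down the field of definition as $K_\F$ rather than the possibly larger $K_{(N)}$. Algebraic integrality would follow by examining the $q$-expansion of $g_{\mathbf{r}}^{12N}$: after extracting the leading factor $q^{6N\mathbf{B}_2(r_1)}$, the remaining infinite product has algebraic integer coefficients in $\mathbb{Z}[e^{2\pi \mathrm{i}/N}]$, and an estimate on $|q|$ at the CM point gives convergence to an algebraic integer. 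When $N$ is composite, I would exploit the distribution relations for Siegel functions, which allow one to write $g_\F(C)^{-1}$ as a polynomial expression in Siegel values of strictly smaller conductor, forcing $g_\F(C)^{-1}$ to be integral as well; hence $g_\F(C)$ is a unit.

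For part (ii), the tool is the Shimura reciprocity law, which computes $g_\F(C)^{\sigma_\F(C')}$ by acting simultaneously on the parameter vector $\mathbf{r}$ and on the CM point $\tau$ through a matrix in $GL_2(\mathbb{Z}/N\mathbb{Z})$ associated to an idele of $C'$. My plan is to pick an integral ideal $\mathfrak{c}'$ in $(C')^{-1}$ coprime to $\F$, represent its idelic class explicitly, and then make the change of basis passing from $\F\mathfrak{c}^{-1}$ to $\F(\mathfrak{c}\mathfrak{c}')^{-1}$ precisely match the transformation rule dictated by Shimura's theorem. Writing $N$ as a $\mathbb{Z}$-combination of the new basis should then reproduce exactly the vector entering the definition of $g_\F(CC')$ in (\ref{invariant}).

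The main obstacle will be executing this reciprocity computation without spurious root-of-unity ambiguities. Siegel functions are modular only up to a $12N$-th root of unity under $SL_2(\mathbb{Z})$, and the $12N$ exponent in the definition of $g_\F(C)$ is exactly what kills this ambiguity; I would need to track all the automorphy factors arising from the finite-idelic action carefully to ensure no extra root of unity is left over, so that equality, and not merely equality up to a root of unity, holds in the transformation formula.
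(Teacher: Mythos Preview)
The paper does not supply its own argument here: its entire proof is the citation ``\cite[Chapter 19, Theorem 3]{Lang} and \cite[Chapter 11, Theorem 1.2]{Kubert}.'' Your proposal is therefore not competing with an in-paper proof but rather sketching the content of those references, and in broad outline your plan (Shimura reciprocity for part~(ii), modular-unit properties and distribution relations for part~(i)) is indeed how the cited sources proceed.

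That said, one step in your sketch has a genuine gap. For algebraic integrality in part~(i) you write that the $q$-expansion of $g_{\mathbf{r}}^{12N}$ has algebraic-integer coefficients and that ``an estimate on $|q|$ at the CM point gives convergence to an algebraic integer.'' Convergence of a power series with integral coefficients does not by itself force the limit to be an algebraic integer; a transcendental value is perfectly compatible with such data. The correct mechanism is that $g_{\mathbf{r}}^{12N}$, being holomorphic on $\mathbb{H}$ with $q$-expansion in $\mathbb{Z}[e^{2\pi \mathrm{i}/N}][[q^{1/N}]]$ up to a leading power, is \emph{integral over} $\mathbb{Z}[j]$; one then invokes the separate (and nontrivial) theorem that $j(\tau)$ is an algebraic integer at CM points. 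Your plan should route through this two-step argument rather than appeal directly to the $q$-expansion. The remaining points---the refinement from $K_{(N)}$ down to $K_\F$, the distribution relations giving the unit property for composite $N$, and the bookkeeping of $12N$-th roots of unity in the reciprocity computation---are exactly the issues handled in Kubert--Lang and Lang, so your identification of the obstacles is accurate.
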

\begin{proof}
\cite[Chapter 19, Theorem 3]{Lang} and \cite[Chapter 11, Theorem 1.2]{Kubert}.
\end{proof}

Let $\chi$ be a nontrivial character of $\mathrm{Cl}(\F)$ with $\F\neq\o_K$, $\F_\chi$ be a conductor of $\chi$ and
$\chi_0$ be the primitive character of $\mathrm{Cl}(\F_\chi)$ corresponding to $\chi$.
The \textit{Stickelberger element} and the \textit{L-function} for $\chi$ are defined by
\begin{eqnarray*}
S_\F(\chi)&=&\sum_{C\in\mathrm{Cl}(\F)}\chi(C)\log|g_\F(C)|,\\
L_\F(s,\chi)&=&\sum_{\substack{(0)\neq \mathfrak{a}\subset \o_K \\ \gcd(\mathfrak{a},\F)=1 }} \frac{\chi(\mathfrak{a})}{\mathcal{N}(\mathfrak{a})^s}\quad (s\in\mathbb{C}),
\end{eqnarray*}
respectively, where $\mathcal{N}(\mathfrak{a})$ is the absolute norm of an ideal $\mathfrak{a}$.
The second Kronecker limit formula describes the relation between the Stickelberger element and the L-function as follows:

\begin{proposition}\label{L-function relation}
Let $\chi$ be a nontrivial character of $\mathrm{Cl}(\F)$ with $\F_\chi\neq\o_K$.
Then we have
\begin{equation*}
L_{\F_\chi}(1,\chi_0)\prod_{\substack{\mathfrak{p}\,|\,\F \\ \mathfrak{p}\,\nmid\,\F_\chi }}(1-\overline{\chi_0}([\mathfrak{p}]))
=-\frac{2\pi \chi_0([\gamma\mathfrak{d}_K \F_\chi])}{6N(\F_\chi) \O(\F_\chi)T_\gamma(\overline{\chi_0})\sqrt{-d_K}}  \cdot S_\F(\overline{\chi}),
\end{equation*}
where $\mathfrak{d}_K$  is the different  of $K/\mathbb{Q}$, $\gamma$ is an element of $K$ such that $\gamma\mathfrak{d}_K\F_\chi$ is an integral ideal of $K$ prime to $\F_\chi$,
$N(\F_\chi)$ is the smallest positive integer in $\F_\chi$,
$\O(\F_\chi)$ is the number of roots of unity in $K$ which are $\equiv 1\pmod{\F_\chi}$ and
\begin{equation*}
T_\gamma(\overline{\chi_0})=\sum_{x+\F_\chi\in(\o_K/\F_\chi)^\times}\overline{\chi_0}([x\o_K])e^{2\pi \mathrm{i} \mathrm{Tr}_{K/\mathbb{Q}}(\gamma x)}.
\end{equation*}

\end{proposition}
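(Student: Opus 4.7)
The plan is to reduce the statement to the classical second Kronecker limit formula applied to each partial zeta function of $L_{\F_\chi}(s,\chi_0)$, and then to pass from the primitive conductor $\F_\chi$ to the given conductor $\F$ via a distribution relation for Siegel--Ramachandra invariants.

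First I would decompose
\begin{equation*}
L_{\F_\chi}(s,\chi_0) = \sum_{C\in\mathrm{Cl}(\F_\chi)} \chi_0(C)\,\zeta_{\F_\chi}(s,C),
\end{equation*}
where $\zeta_{\F_\chi}(s,C)$ sums $\mathcal{N}(\mathfrak{a})^{-s}$ over integral ideals $\mathfrak{a}\in C$ coprime to $\F_\chi$. For each $C$, picking an integral ideal $\mathfrak{c}$ in the inverse class coprime to $\F_\chi$ and writing $\mathfrak{a}=(\alpha)\mathfrak{c}^{-1}$ with $\alpha\in\mathfrak{c}$, one realizes each partial zeta function as an Eisenstein-type lattice sum indexed by a coset of $\F_\chi\mathfrak{c}$ inside $\mathfrak{c}$, modulo units.

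Second I would Fourier-dualize $\chi_0$ on $(\o_K/\F_\chi)^\times$ so that each such lattice sum acquires the shape required by the classical second Kronecker limit formula, which for a lattice spanned by $[\omega_1,\omega_2]$ with $\omega_1/\omega_2\in\mathbb{H}$ and $\mathbf{r}\in\mathbb{Q}^2\setminus\mathbb{Z}^2$ identifies
\begin{equation*}
\lim_{s\to 1}\sum_{(m_1,m_2)\in\mathbb{Z}^2\setminus\{(0,0)\}} \frac{e^{2\pi\mathrm{i}(m_1 r_1+m_2 r_2)}}{|m_1\omega_1+m_2\omega_2|^{2s}}
\end{equation*}
with $\log|g_\mathbf{r}(\omega_1/\omega_2)|$ up to explicit factors in $\omega_2$ and $\mathrm{Im}(\omega_1/\omega_2)$. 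Since the trace-pairing dual of $\F_\chi\mathfrak{c}^{-1}$ is $\mathfrak{d}_K^{-1}\F_\chi^{-1}\mathfrak{c}$, the auxiliary element $\gamma\in K$ with $\gamma\mathfrak{d}_K\F_\chi$ integral and coprime to $\F_\chi$ enters as the vehicle transporting characters between these two lattices. The dualization then produces the Gauss sum $T_\gamma(\overline{\chi_0})$, whose reciprocal appears in the final formula, together with the global twist $\chi_0([\gamma\mathfrak{d}_K\F_\chi])$; after orthogonality collapses the double sum over classes one arrives at the primitive version
\begin{equation*}
L_{\F_\chi}(1,\chi_0) = -\frac{2\pi\,\chi_0([\gamma\mathfrak{d}_K\F_\chi])}{6N(\F_\chi)\,\O(\F_\chi)\,T_\gamma(\overline{\chi_0})\,\sqrt{-d_K}}\cdot S_{\F_\chi}(\overline{\chi_0}),
\end{equation*}
in which $\O(\F_\chi)$ compensates overcounting by units $\equiv 1\pmod{\F_\chi}$ while $6N(\F_\chi)$ absorbs the exponent $12N(\F_\chi)$ built into the definition of $g_{\F_\chi}(C)$.

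To obtain the statement as printed, I would then invoke the distribution relation for Siegel--Ramachandra invariants along the natural surjection $\pi:\mathrm{Cl}(\F)\to\mathrm{Cl}(\F_\chi)$. Since $\chi$ is inflated from $\chi_0$, summing $\overline{\chi}(C)\log|g_\F(C)|$ over $\mathrm{Cl}(\F)$ and grouping by the fibers of $\pi$ yields
\begin{equation*}
S_\F(\overline{\chi}) = \prod_{\substack{\mathfrak{p}|\F\\ \mathfrak{p}\nmid\F_\chi}}\bigl(1-\overline{\chi_0}([\mathfrak{p}])\bigr)\cdot S_{\F_\chi}(\overline{\chi_0}),
\end{equation*}
which slots into the primitive formula above to produce the identity of the proposition. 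The main obstacle will be the second step, namely the consistent tracking of root-of-unity factors, Gauss-sum reciprocals, and lattice-dual normalizations so that the final coefficient emerges in exactly the stated shape; for this reason I would in practice reduce to Siegel's original calculation or to one of the treatments in Lang or Kubert--Lang rather than re-derive the analytic identity from first principles.
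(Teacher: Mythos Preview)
The paper does not actually prove this proposition: its entire proof is the citation \cite[Chapter 11 \S2, LF 2]{Kubert}. Your sketch is precisely the standard derivation carried out in that reference (partial zeta decomposition, second Kronecker limit formula with the dual-lattice bookkeeping producing $T_\gamma(\overline{\chi_0})$ and $\chi_0([\gamma\mathfrak{d}_K\F_\chi])$, then the distribution relation $S_\F(\overline{\chi})=\prod_{\mathfrak{p}\mid\F,\,\mathfrak{p}\nmid\F_\chi}(1-\overline{\chi_0}([\mathfrak{p}]))\,S_{\F_\chi}(\overline{\chi_0})$ to pass from $\F_\chi$ to $\F$), and you yourself note that you would defer to Kubert--Lang for the constant-tracking; so your approach and the paper's coincide.
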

\begin{proof}
\cite[Chapter 11 \S2, LF 2]{Kubert}.
\end{proof}

\begin{remark}\label{Stickremark}
 Since $\chi_0$ is a nontrivial primitive character of $\mathrm{Cl}(\mathfrak{f}_\chi)$, both $L_{\mathfrak{f}_\chi}(1,\chi_0)$ and the Gauss sum  $T_\gamma(\overline{\chi}_0)$ are nonzero (\cite[Chapter V, Theorem 10.2]{Janusz}, \cite[Chapter 22 $\S$1, G 3]{Lang}).
If every prime ideal factor of $\mathfrak{f}$ divides $\mathfrak{f}_\chi$ then we understand the Euler factor 
$\prod_{{\mathfrak{p}\,|\,\F,~  \mathfrak{p}\,\nmid\,\F_\chi }}(1-\overline{\chi_0}([\P]))$ to be $1$, and hence
we conclude $S_\F(\overline{\chi})\neq 0$.
\end{remark}

For an intermediate field $L$ of the extension $K_\F/K$, we denote by $\mathrm{Cl}(K_\F/L)$ the subgroup of $\mathrm{Cl}(\F)$ corresponding to $\mathrm{Gal}(K_\F/L)$ via the Artin map (\ref{artin map}).
Then one can identify $\mathrm{Cl}(K_\F/H_K)$ with the quotient group
\begin{equation*}
(\o_K/\mathfrak{f})^\times  / \{\alpha+\mathfrak{f}\in (\o_K/\mathfrak{f})^\times~|~\alpha\in \o_K^\times \}
\end{equation*}
via the natural homomorphism
\begin{equation*}
\begin{array}{ccc}
(\o_K /\mathfrak{f})^\times&\longrightarrow &\mathrm{Cl}(K_\F/H_K)\\
\alpha +\F&\longmapsto& [(\alpha)].
\end{array}
\end{equation*}
Let $\mathfrak{f}=\prod_\mathfrak{p}\mathfrak{p}^{e_\mathfrak{p}}$ be a prime ideal factorization of $\F$.
For each prime ideal $\mathfrak{p}$, we set
\begin{equation*}
\mathbf{G}_\mathfrak{p}=(\o_K/\mathfrak{p}^{e_\mathfrak{p}})^\times  / \{\alpha+\mathfrak{p}^{e_\mathfrak{p}}\in (\o_K/\mathfrak{p}^{e_\mathfrak{p}})^\times~|~\alpha\in \o_K^\times \}
\end{equation*}
so that $\mathbf{G}_\mathfrak{p}\cong \mathrm{Cl}(K_{\mathfrak{p}^{e_\mathfrak{p}}}/H_K)\subset\mathrm{Cl}(\mathfrak{p}^{e_\mathfrak{p}})$.
Then we have 
\begin{equation*}
|\mathbf{G}_\mathfrak{p}|=\phi(\mathfrak{p}^{e_\mathfrak{p}})\frac{\omega(\mathfrak{p}^{e_\mathfrak{p}})}{\omega_K}
\end{equation*}
where $\phi(\mathfrak{p}^{e_\mathfrak{p}})=|(\mathcal{O}_K/\mathfrak{p}^{e_\mathfrak{p}})^\times|$, $\omega_K$ is the number of roots of unity in $K$, and $\omega(\mathfrak{p}^{e_\mathfrak{p}})$ is the number of roots of unity in $K$ which are $\equiv 1\pmod{\mathfrak{p}^{e_\mathfrak{p}}}$.

\begin{lemma}\label{extension}
Let $H\subset G$ be two finite abelian groups, $g\in G\setminus H$ and $n$ be the order of the coset $[g]$ in $G/H$.
Then for any character $\chi$ of $H$, we can extend it to a character $\psi$ of $G$ in such a way that $\psi(g)$ is any fixed $n$-th root of $\chi(g^n)$.

\end{lemma}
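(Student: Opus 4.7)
The plan is to extend $\chi$ in two stages: first from $H$ to the subgroup $H':=\langle H,g\rangle$, and then from $H'$ to all of $G$ by iteration.

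For the first stage, fix an $n$-th root $\zeta\in\mathbb{C}^\times$ of $\chi(g^n)$. Since the order of $[g]$ in $H'/H$ coincides with its order in $G/H$, namely $n$, every element of $H'$ has a unique representation $g^k h$ with $0\le k\le n-1$ and $h\in H$. This makes the rule
\[
\psi'(g^k h):=\zeta^k\chi(h)
\]
a well-defined set map $\psi':H'\to\mathbb{C}^\times$ with $\psi'|_H=\chi$ and $\psi'(g)=\zeta$. To verify that $\psi'$ is a homomorphism one only has to control the case $k+k'\ge n$ in the product $(g^kh)(g^{k'}h')$: when the exponent overflows, the correction factor that appears is precisely $\zeta^n/\chi(g^n)$, which equals $1$ by the choice of $\zeta$. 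This is the only nontrivial identity required at this stage.

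For the second stage, if $H'\ne G$ we pick any $g_1\in G\setminus H'$, let $n_1$ be the order of $[g_1]$ in $G/H'$, choose any $n_1$-th root of $\psi'(g_1^{n_1})$ (which exists because $\mathbb{C}^\times$ is divisible), and apply the first-stage construction with $(H,g,n,\chi)$ replaced by $(H',g_1,n_1,\psi')$. Each such step strictly enlarges the domain, and since $[G:H]$ is finite, finitely many iterations produce the desired character $\psi$ of $G$ that restricts to $\chi$ on $H$ and satisfies $\psi(g)=\zeta$.

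The only point that demands any care is the well-definedness and multiplicativity of $\psi'$ in the first stage; this is precisely where the hypothesis $\zeta^n=\chi(g^n)$ is used. Beyond this, the divisibility of $\mathbb{C}^\times$ supplies every auxiliary root needed to keep the iteration running, and no deeper input is required.
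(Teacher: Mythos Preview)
Your argument is correct and is precisely the standard proof; the paper does not give its own argument but simply cites \cite[Chapter VI, Proposition 1]{Serre}, whose proof is exactly the two-stage extension you wrote out. There is nothing to add.
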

\begin{proof}
\cite[Chapter VI, Proposition 1]{Serre}.
\end{proof}

Let $L$ be a finite abelian extension of $K$ such that $K\subsetneq L\subset K_{\F}$ and $L\not\subset H_K$.
Replacing $\mathfrak{f}$ by $\mathfrak{f}\mathfrak{p}^{-e_\mathfrak{p}}$ if necessary, we may assume that $L\not\subset K_{\mathfrak{f}\mathfrak{p}^{-e_\mathfrak{p}}}$ for every prime ideal factor $\mathfrak{p}$ of $\mathfrak{f}$.

\begin{lemma}\label{existence of character}
Assume that for each prime ideal factor $\mathfrak{p}$ of $\mathfrak{f}$ there is a rational prime $\nu_\mathfrak{p}$ satisfying 
$\mathrm{ord}_{\nu_\mathfrak{p}}(|\mathbf{G}_\mathfrak{p}|)>\mathrm{ord}_{\nu_\mathfrak{p}}([K_{\F}:LH_K])+i_\mathfrak{p}$
where 
\begin{equation*}
i_\mathfrak{p}=\left\{
\begin{array}{ll}
0&\textrm{if $\nu_\mathfrak{p}\neq 2$}\\
1&\textrm{if $\nu_\mathfrak{p}=2$}.
\end{array}\right.
\end{equation*}
Then, for any class $D\in \mathrm{Cl}(\F)\setminus \mathrm{Cl}(K_\F/L)$ there exists a character $\chi$ of $\mathrm{Cl}(\F)$ such that $\chi|_{\mathrm{Cl}(K_\F/L)}=1$, $\chi(D)\neq 1$ and $\mathfrak{p}\,|\,\F_\chi$ for every prime ideal factor $\mathfrak{p}$ of $\mathfrak{f}$.
\end{lemma}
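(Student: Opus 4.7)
The plan is to pass to the finite abelian quotient $\bar A := \mathrm{Cl}(\F)/\mathrm{Cl}(K_\F/L)\cong\mathrm{Gal}(L/K)$: the required $\chi$ corresponds to a character $\bar\chi$ of $\bar A$ that is nontrivial on the image $\bar D$ of $D$ (nonzero since $D\notin\mathrm{Cl}(K_\F/L)$) and on the image $\overline{\mathbf G}_\mathfrak p$ of $\mathbf G_\mathfrak p$ for each prime factor $\mathfrak p$ of $\F$, the latter being nontrivial precisely because the preliminary reduction $L\not\subset K_{\F\mathfrak p^{-e_\mathfrak p}}$ forbids $\mathbf G_\mathfrak p$ from being absorbed into $\mathrm{Cl}(K_\F/L)$.

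First I would extract the key numerical bound on $\overline{\mathbf G}_\mathfrak p$. Since $\mathbf G_\mathfrak p\cap\mathrm{Cl}(K_\F/L)\subset\mathrm{Cl}(K_\F/H_K)\cap\mathrm{Cl}(K_\F/L)=\mathrm{Cl}(K_\F/LH_K)$, its cardinality divides $[K_\F:LH_K]$, and the hypothesis yields
\[
\mathrm{ord}_{\nu_\mathfrak p}(|\overline{\mathbf G}_\mathfrak p|)=\mathrm{ord}_{\nu_\mathfrak p}(|\mathbf G_\mathfrak p|)-\mathrm{ord}_{\nu_\mathfrak p}(|\mathbf G_\mathfrak p\cap\mathrm{Cl}(K_\F/L)|)>i_\mathfrak p.
\]
Thus the $\nu_\mathfrak p$-Sylow subgroup of $\overline{\mathbf G}_\mathfrak p$ has order at least $\nu_\mathfrak p^{i_\mathfrak p+1}$; in particular I may choose a nontrivial element $c_\mathfrak p\in\overline{\mathbf G}_\mathfrak p$ whose order is a positive power of $\nu_\mathfrak p$, with the extra factor present when $\nu_\mathfrak p=2$ (i.e.\ $i_\mathfrak p=1$) giving additional flexibility to offset the rigidity of order-$2$ characters.

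With $\bar D$ and the $c_\mathfrak p$ in hand, I would build $\bar\chi$ by iterated application of Lemma \ref{extension}. Start from the trivial subgroup and trivial character, and extend first to $\langle\bar D\rangle$ by assigning $\bar\chi(\bar D)$ a primitive $\mathrm{ord}(\bar D)$-th root of unity, making $\bar\chi$ faithful on this cyclic subgroup. Then process the prime factors $\mathfrak p_1,\ldots,\mathfrak p_k$ of $\F$ one at a time: at stage $j$, if $c_{\mathfrak p_j}$ is not yet in the working subgroup $S$, enlarge $S$ to $\langle S,c_{\mathfrak p_j}\rangle$ and use Lemma \ref{extension} to assign $\bar\chi(c_{\mathfrak p_j})$ a nontrivial $n$-th root of the previously determined value $\bar\chi(c_{\mathfrak p_j}^{n})$, where $n\ge 2$ is the order of $[c_{\mathfrak p_j}]$ in the enlarged quotient. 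A final invocation of Lemma \ref{extension} extends $\bar\chi$ to all of $\bar A$.

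The principal obstacle is the case when $c_{\mathfrak p_j}$ already lies in the current subgroup $S$, for then $\bar\chi(c_{\mathfrak p_j})$ is forced and could accidentally equal $1$. To handle this I would invoke the primary decomposition $\bar A=\prod_\nu\bar A^{(\nu)}$ and construct $\bar\chi=\prod_\nu\bar\chi^{(\nu)}$ prime by prime, reducing the problem to finding, in each $\nu$-Sylow $\bar A^{(\nu)}$, a character nontrivial on every $\overline{\mathbf G}_\mathfrak p^{(\nu)}$ for $\mathfrak p$ with $\nu_\mathfrak p=\nu$ (and also on $\bar D^{(\nu)}$ for one fixed $\nu$ dividing $\mathrm{ord}(\bar D)$). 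Within each such $\nu$-Sylow, the size bound $|\overline{\mathbf G}_\mathfrak p^{(\nu)}|\ge\nu^{i_\mathfrak p+1}$ together with the direct-summand structure of $\mathbf G_\mathfrak p$ inside $\mathrm{Cl}(K_\F/H_K)=\prod_\mathfrak q\mathbf G_\mathfrak q$ furnishes enough room to iteratively select compatible root-of-unity values at each extension step without collapse, and the extra factor of $i_\mathfrak p=1$ when $\nu_\mathfrak p=2$ precisely prevents the pathological situation in which products of $\pm 1$'s would cancel to $1$. Verifying that this prime-by-prime induction can always be consistently carried out is the core technical content of the proof.
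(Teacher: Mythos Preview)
Your proposal has the right overall goal but contains a structural misidentification that undermines the argument, and it stops short of the actual work. You treat $\mathbf{G}_\mathfrak{p}$ as a subgroup of $\mathrm{Cl}(\F)$ (writing $\mathbf{G}_\mathfrak{p}\cap\mathrm{Cl}(K_\F/L)$ and ``image of $\mathbf{G}_\mathfrak{p}$ in $\bar A$''), but in the paper $\mathbf{G}_\mathfrak{p}$ is a \emph{quotient} of $(\o_K/\mathfrak{p}^{e_\mathfrak{p}})^\times$ sitting inside $\mathrm{Cl}(\mathfrak{p}^{e_\mathfrak{p}})$, and the relevant map is the \emph{projection} $\varphi_\mathfrak{p}:\mathrm{Cl}(K_\F/H_K)\to\mathbf{G}_\mathfrak{p}$, not an embedding. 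Relatedly, your claimed decomposition $\mathrm{Cl}(K_\F/H_K)=\prod_\mathfrak{q}\mathbf{G}_\mathfrak{q}$ is false: the Chinese remainder theorem gives $(\o_K/\F)^\times\cong\prod_\mathfrak{q}(\o_K/\mathfrak{q}^{e_\mathfrak{q}})^\times$, but after quotienting by the diagonally embedded units one no longer has a direct product of the $\mathbf{G}_\mathfrak{q}$'s. Since your handling of the ``principal obstacle'' leans on precisely this direct-summand structure, and since you explicitly defer the verification (``Verifying that this prime-by-prime induction can always be consistently carried out is the core technical content''), the argument as written is both incorrect in its premises and incomplete.

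The paper's proof is organized differently and avoids these issues. It starts with any character $\chi$ of $\mathrm{Cl}(\F)$ trivial on $\mathrm{Cl}(K_\F/L)$ with $\chi(D)\neq 1$, then for each $\mathfrak{p}\nmid\F_\chi$ constructs a correction character $\psi_\mathfrak{p}$ by first choosing a character of $\mathbf{G}_\mathfrak{p}$ trivial on $\mathrm{Im}(\varphi_\mathfrak{p}|_{\mathrm{Cl}(K_\F/LH_K)})$, pulling it back along $\varphi_\mathfrak{p}$, and extending to $\mathrm{Cl}(\F)$ via Lemma~\ref{extension}. Because $\psi_\mathfrak{p}$ factors through $\mathbf{G}_\mathfrak{p}$, its conductor is a pure $\mathfrak{p}$-power, so replacing $\chi$ by $\chi\psi_\mathfrak{p}$ adds $\mathfrak{p}$ to the conductor without disturbing the other primes. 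The hypothesis enters exactly once per $\mathfrak{p}$: it forces $|\mathbf{G}_\mathfrak{p}/\mathrm{Im}(\varphi_\mathfrak{p}|_{\mathrm{Cl}(K_\F/LH_K)})|\ge\nu_\mathfrak{p}^{\,i_\mathfrak{p}+1}\ge 3$, so that in the cyclic case one can pick a character value avoiding both $1$ and $\chi(D^n)^{-1}$, ensuring $\chi\psi_\mathfrak{p}(D)\neq 1$. This ``projection--correct--iterate'' scheme sidesteps the need for any internal direct-sum structure in $\bar A$.
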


\begin{proof}
By Lemma \ref{extension}, there exists a character $\chi$ of $\mathrm{Cl}(\F)$ satisfying $\chi|_{\mathrm{Cl}(K_\F/L)}=1$ and $\chi(D)\neq 1$.
For each $\mathfrak{p}$, we define a homomorphism $\varphi_\mathfrak{p}$ by
\begin{equation*}
\begin{array}{cccc}
\varphi_\mathfrak{p}:&\mathrm{Cl}(K_\F/H_K)&\rightarrow& \mathbf{G}_\mathfrak{p}
\\
&[\alpha+\F]&\longmapsto&[\alpha+\mathfrak{p}^{e_\mathfrak{p}}].
\end{array}
\end{equation*}
Suppose that $\mathfrak{p}\nmid \F_\chi$ for some $\mathfrak{p}$.
Let $n$ be the order of the class $D$ in the quotient group $\mathrm{Cl}(\F)/\mathrm{Cl}(K_\F/H_K)$.
Then $D^n=[(\beta)]$ for some $\beta\in\mathcal{O}_K$ which is prime to $\F$.
Note that
\begin{equation*}
\mathrm{Cl}(K_\F/L)\cap\mathrm{Cl}(K_\F/H_K)=\mathrm{Cl}(K_\F/L H_K).
\end{equation*}
\begin{enumerate}
\item[\textbf{Case 1.}]
First, suppose that $\mathbf{G}_\mathfrak{p}/\mathrm{Im}(\varphi_\mathfrak{p}|_{\mathrm{Cl}(K_\F/L H_K)})\neq\langle [\beta+\mathfrak{p}^{e_\mathfrak{p}}]\rangle$.
Then there exists a nontrivial character $\psi$ of $\mathbf{G}_\mathfrak{p}$ in such a way that  $\psi$ is trivial on $\mathrm{Im}(\varphi_\mathfrak{p}|_{\mathrm{Cl}(K_\F/L H_K)})$ and $\psi([\beta+\mathfrak{p}^{e_\mathfrak{p}}])=1$.
Let $\psi'=\psi \circ \varphi_\mathfrak{p}$ be a character of $\mathrm{Cl}(K_\F/H_K)$.
Then it is possible for us to extend $\psi'$ to a character $\psi_\mathfrak{p}$ of $\mathrm{Cl}(\F)$ such that $\psi_\mathfrak{p}|_{\mathrm{Cl}(K_\F/L)}=1$ and $\psi_\mathfrak{p}(D)=1$  by Lemma \ref{extension}.

\item[\textbf{Case 2.}] 
Now, assume that $\mathbf{G}_\mathfrak{p}/\mathrm{Im}(\varphi_\mathfrak{p}|_{\mathrm{Cl}(K_\F/L H_K)})=\langle [\beta+\mathfrak{p}^{e_\mathfrak{p}}]\rangle$.
By the hypothesis, there is a nontrivial character $\psi$ of $\mathbf{G}_\mathfrak{p}$ such that 
$\psi$ is trivial on $\mathrm{Im}(\varphi_\mathfrak{p}|_{\mathrm{Cl}(K_\F/L H_K)})$ and $\psi([\beta+\mathfrak{p}^{e_\mathfrak{p}}])\neq 1, \chi(D^n)^{-1}$.
Similarly as in \textbf{Case 1} one can extend $\psi$ to a character $\psi_\mathfrak{p}$ of $\mathrm{Cl}(\F)$ for which $\psi_\mathfrak{p}|_{\mathrm{Cl}(K_\F/L)}=1$ and $\psi_\mathfrak{p}(D)\neq \chi(D)^{-1}$.

\end{enumerate}

Here we observe that $\psi_\mathfrak{p}$ is a nontrivial character whose conductor is solely divisible by $\mathfrak{p}$ in both cases.
Hence the character $\chi\psi_\mathfrak{p}$ of $\mathrm{Cl}(\F)$ satisfies $\chi\psi_\mathfrak{p}|_{\mathrm{Cl}(K_\F/L)}=1$, $\chi\psi_\mathfrak{p}(D)\neq 1$, $\mathfrak{p}\,|\,\F_{\chi\psi_\mathfrak{p}}$ and $\F_\chi\,|\,\F_{\chi\psi_\mathfrak{p}}$.
Thus, we replace $\chi$ by $\chi\psi_\mathfrak{p}$.
By continuing this process for every $\mathfrak{p}$, we get the lemma
.

\end{proof}

Let $\mathbf{h}_{L,\F}$ be the set of prime ideal factors $\mathfrak{p}$ of $\F$ such that there is no rational prime $\nu_\mathfrak{p}$ satisfying 
$\mathrm{ord}_{\nu_\mathfrak{p}}(|\mathbf{G}_\mathfrak{p}|)>\mathrm{ord}_{\nu_\mathfrak{p}}([K_{\F}:LH_K])+i_\mathfrak{p}$.

\begin{theorem}\label{singular value}
Let $\mathfrak{f}=\prod_\mathfrak{p}\mathfrak{p}^{e_\mathfrak{p}}$ be  a nonzero proper integral ideal of $K$ and $L$ be a finite abelian extension of $K$ such that $K\subset L\subset K_{\F}$. 
Assume that $L\not\subset H_K$ and $L\not\subset K_{\mathfrak{f}\mathfrak{p}^{-e_\mathfrak{p}}}$ for every prime ideal factor $\mathfrak{p}$ of $\mathfrak{f}$, and 
\begin{equation}\label{assumption}
\sum_{\mathfrak{p}\in \mathbf{h}_{L,\F}}\frac{1}{[L:L\cap K_{\F\mathfrak{p}^{-e_\mathfrak{p}}}]}\leq\frac{1}{2}.
\end{equation}
Then, for any nonzero integer $n$ and $C\in\mathrm{Cl}(\mathfrak{f})$ the singular value 
\begin{equation*}
N_{K_{\F}/L}(g_\F(C)^{n})
\end{equation*}
generates $L$ over $K$.
In particular, if $| \mathbf{h}_{L,\F}|=0$ or $1$, then the assumption $(\ref{assumption})$ is always true and so we have the desired result.
\end{theorem}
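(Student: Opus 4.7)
The plan is to combine the standard Galois-theoretic reduction with the character sum machinery underlying Proposition~\ref{L-function relation}. Set $\beta := N_{K_\F/L}(g_\F(C)^n) \in L$. Since $K(\beta) \subseteq L$ is automatic, it suffices to show that the stabilizer of $\beta$ in $\mathrm{Cl}(\F)$ under the Artin map $\sigma_\F$ coincides with $M := \mathrm{Cl}(K_\F/L)$. Arguing by contradiction, put $L' := K(\beta)$ and $M' := \mathrm{Cl}(K_\F/L')$, so that $M \subsetneq M'$. I would then seek a character $\chi$ of $\mathrm{Cl}(\F)$ with three properties: (a)~$\chi|_M = 1$; (b)~$\chi$ is nontrivial on $M'/M$; and (c)~$\mathfrak{p} \mid \F_\chi$ for every prime ideal factor $\mathfrak{p}$ of $\F$.

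Once such a $\chi$ is available, I would evaluate the Stickelberger-type sum
\[
T := \sum_{D\in \mathrm{Cl}(\F)/M} \overline{\chi}(D)\,\log\bigl|\beta^{\sigma_\F(D)}\bigr|
\]
in two ways. First, since $\beta$ is $M'$-invariant, $\log|\beta^{\sigma_\F(D)}|$ depends only on $D \bmod M'$, and orthogonality over $M'/M$ combined with (b) forces $T = 0$. Second, Proposition~\ref{Galois action}(ii) gives $\beta = \prod_{E\in M} g_\F(CE)^n$; reindexing the resulting double sum by $F := DE \in \mathrm{Cl}(\F)$ and using $\chi|_M = 1$ collapses $T$ to $n\chi(C)\,S_\F(\overline{\chi})$. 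Equating the two evaluations yields $S_\F(\overline{\chi}) = 0$, which contradicts Proposition~\ref{L-function relation} together with Remark~\ref{Stickremark}: conditions (a)--(b) make $\overline{\chi}$ a nontrivial character with $\F_{\overline{\chi}} = \F_\chi \neq \o_K$, so the primitive $L$-value and Gauss sum do not vanish, while (c) reduces the Euler correction $\prod_{\mathfrak{p}\mid\F,\,\mathfrak{p}\nmid\F_\chi}(1-\overline{\chi_0}([\mathfrak{p}]))$ to the empty product.

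The real work is producing $\chi$. When $\mathbf{h}_{L,\F} = \emptyset$, Lemma~\ref{existence of character} supplies it directly from any $D_0 \in M' \setminus M$. In general I would split (c) into a hard part at the primes in $\mathbf{h}_{L,\F}$ and an easy part at the remaining primes; the easy part can be secured a posteriori by the $\psi_\mathfrak{p}$-adjustment from the proof of Lemma~\ref{existence of character}, which preserves both $\chi|_M = 1$ and $\chi(D_0) \neq 1$ for any fixed $D_0 \in M' \setminus M$, and hence preserves (a) and (b). Setting $\Gamma := \mathrm{Cl}(\F)/M$ and letting $\Gamma_\mathfrak{p}$ denote the image of $\mathrm{Cl}(K_\F/K_{\F\mathfrak{p}^{-e_\mathfrak{p}}})$ in $\Gamma$, one computes $|\Gamma_\mathfrak{p}| = [L:L\cap K_{\F\mathfrak{p}^{-e_\mathfrak{p}}}]$, and $\mathfrak{p} \mid \F_\chi$ is equivalent to $\chi$ being nontrivial on $\Gamma_\mathfrak{p}$. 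A counting argument in $\widehat{\Gamma}$ bounds the characters that fail (b) or fail (c) at some $\mathfrak{p} \in \mathbf{h}_{L,\F}$, and hypothesis (\ref{assumption}) combined with $[L:L']\geq 2$ then guarantees a character meeting (a), (b) and the hard part of (c). The ``in particular'' clause is immediate, since $|\mathbf{h}_{L,\F}|\leq 1$ together with the standing hypothesis $L \not\subset K_{\F\mathfrak{p}^{-e_\mathfrak{p}}}$ (which forces $[L:L\cap K_{\F\mathfrak{p}^{-e_\mathfrak{p}}}]\geq 2$) makes (\ref{assumption}) hold automatically. The subtlest step I foresee is the boundary case $[L:L'] = 2$ with equality in (\ref{assumption}), where the crude union bound is tight and one must extract strict positivity from the cross intersection terms $\widehat{\Gamma/(\Gamma'\Gamma_\mathfrak{p})}$ in inclusion--exclusion.
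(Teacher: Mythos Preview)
Your proposal is correct and follows essentially the same route as the paper: assume $L'\subsetneq L$, manufacture a character $\chi$ of $\mathrm{Cl}(\F)$ that is trivial on $\mathrm{Cl}(K_\F/L)$, nontrivial on $\mathrm{Cl}(K_\F/L')$, and has every prime of $\F$ dividing $\F_\chi$ (the primes in $\mathbf{h}_{L,\F}$ via the counting argument under hypothesis~(\ref{assumption}), the remaining primes via the $\psi_\mathfrak{p}$-adjustment from Lemma~\ref{existence of character}), then compute $S_\F(\overline{\chi})=0$ to contradict Remark~\ref{Stickremark}. The boundary case you flag needs no inclusion--exclusion: since the trivial character lies in every bad set $\{\chi:\mathfrak{p}\nmid\F_\chi\}$ but is excluded by definition from both $G_1$ and $G_2$, the paper simply subtracts it once from the union bound to get $|G_2|\le\tfrac12[L:K]-1<\tfrac12[L:K]\le|G_1|$.
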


\begin{proof}
It is clear when $L=K$, and so we may assume that $K\subsetneq L$.
Let 
\begin{equation*}
L'=K(N_{K_{\F}/L}(g_\F(C_0)^{n})),
\end{equation*}
where $C_0$ is the unit class in $\mathrm{Cl}(\mathfrak{f})$.
On the contrary, suppose $L'\subsetneq L$.
Then we claim that there exists a character $\chi$ of $\mathrm{Cl}(\F)$ satisfying $\chi|_{\mathrm{Cl}(K_\F/L)}=1$, $\chi|_{\mathrm{Cl}(K_\F/L')}\neq 1$ and $\mathfrak{p}\,|\,\F_\chi$ for every $\mathfrak{p}\in \mathbf{h}_{L,\F}$.
Indeed, if $| \mathbf{h}_{L,\F}|=0$ then it is clear by Lemma \ref{extension}.
Suppose $| \mathbf{h}_{L,\F}|\geq 1$. 
Let
\begin{eqnarray*}
G_1
&=&\left\{\textrm{characters $\chi$ of $\mathrm{Cl}(\F)~|~ \chi|_{\mathrm{Cl}(K_\F/L)}=1, \chi|_{\mathrm{Cl}(K_\F/L')}\neq 1$} \right\},\\
G_2
&=&\left\{\textrm{nontrivial characters $\chi$ of $\mathrm{Cl}(\F)~|~ \chi|_{\mathrm{Cl}(K_\F/L)}=1$ and $ \mathfrak{p}\nmid\F_{\chi}$ for some $\mathfrak{p}\in \mathbf{h}_{L,\F}$} \right\}
\end{eqnarray*}
Observe that all characters in $G_1$ are nontrivial.
Then we have
\begin{eqnarray*}
|G_1|
&=&\left|\left\{\textrm{characters $\chi$ of $\mathrm{Gal}(L/K)~|~ \chi|_{\mathrm{Gal}(L/L')}\neq 1$} \right\}\right|\quad\textrm{since  $\mathrm{Cl}(\F)/\mathrm{Cl}(K_\F/L)\cong\mathrm{Gal}(L/K)$}\\
&=&\left|\left\{\textrm{characters $\chi$ of $\mathrm{Gal}(L/K)$} \right\}\right|
-\left|\left\{\textrm{characters $\chi$ of $\mathrm{Gal}(L/K)~|~ \chi|_{\mathrm{Gal}(L/L')}=1$} \right\}\right|\\
&=&[L : K]-[L':K]\\
&=&[L : K]\left(1-\frac{1}{[L:L']}\right)\\
&\geq& \displaystyle\frac{1}{2}[L : K].
\end{eqnarray*}
On the other hand, we deduce
\begin{eqnarray*}
|G_2|
&=&\left|\left\{\textrm{characters $\chi$ of $\mathrm{Cl}(\F)~|~ \chi|_{\mathrm{Cl}(K_\F/L)}=1$ and $ \F_{\chi}\,|\,\F\mathfrak{p}^{-e_\mathfrak{p}}$ for some $\mathfrak{p}\in \mathbf{h}_{L,\F}$} \right\}\right|-1\\
&\leq&\sum_{\mathfrak{p}\in \mathbf{h}_{L,\F}}\left|\left\{\textrm{characters $\chi$ of $\mathrm{Cl}(\F\mathfrak{p}^{-e_\mathfrak{p}})~|~ \chi|_{\mathrm{Cl}(K_{\F\mathfrak{p}^{-e_\mathfrak{p}}}/L\cap K_{\F\mathfrak{p}^{-e_\mathfrak{p}}})}=1$} \right\}\right|-1\\
&=&\sum_{\mathfrak{p}\in \mathbf{h}_{L,\F}}[L\cap K_{\F\mathfrak{p}^{-e_\mathfrak{p}}}:K]-1\\
&=&[L : K]\left(\sum_{\mathfrak{p}\in \mathbf{h}_{L,\F}}\frac{1}{[L:L\cap K_{\F\mathfrak{p}^{-e_\mathfrak{p}}}]}\right)-1\\
&\leq& \frac{1}{2}[L : K]-1\quad\textrm{by (\ref{assumption})}.
\end{eqnarray*}
Hence $|G_1|>|G_2|$ and so the claim is proved.
\par
Choose a class $D\in \mathrm{Cl}(K_\F/L')\setminus \mathrm{Cl}(K_\F/L)$ such that $\chi(D)\neq 1$.
We then see from the proof of Lemma \ref{existence of character} that there is a character $\psi$ of $\mathrm{Cl}(\F)$ satisfying $\chi\psi|_{\mathrm{Cl}(K_\F/L)}=1$, $\chi\psi(D)\neq 1$,
 $\mathfrak{f}_{\chi}\, |\, \mathfrak{f}_{\chi\psi}$ and $\mathfrak{p} \,|\, \mathfrak{f}_{\chi\psi}$ for  every prime ideal factor $\mathfrak{p}$ of $\mathfrak{f}$.
We replace $\chi$ by $\chi\psi$.
\par

Since $\chi$ is nontrivial and $\F_\chi\neq \o_K$, we obtain $S_\F(\overline{\chi})\neq 0$ by Proposition \ref{L-function relation}.
On the other hand, we derive that
\begin{eqnarray*}
S_\F(\overline{\chi})&=&\frac{1}{n}\sum_{E\in\mathrm{Cl}(\F)}\overline{\chi}(E)\log\big|g_\F(E)^n\big|\\
&=&\frac{1}{n}\sum_{E\in\mathrm{Cl}(\F)}\overline{\chi}(E)\log\big|(g_\F(C_0)^n)^{\sigma_\F(E)}\big|\quad\textrm{(by Proposition \ref{Galois action})}\\
&=&\frac{1}{n}
\sum_{\substack{E_1\in\mathrm{Cl}(\F)\\ E_1\bmod{\mathrm{Cl}(K_\F/L')}}}
~\sum_{\substack{E_2\in\mathrm{Cl}(K_\F/L')\\ E_2\bmod{\mathrm{Cl}(K_\F/L)}}}
~\sum_{E_3\in\mathrm{Cl}(K_\F/L)}
\overline{\chi}(E_1E_2E_3)\log\big|(g_\F(C_0)^n)^{\sigma_\F(E_1E_2E_3)}\big|\\
&=&\frac{1}{n}\sum_{E_1}\overline{\chi}(E_1)\sum_{E_2}\overline{\chi}(E_2)
\log\big|N_{K_{\F}/L}(g_\F(C_0)^{n})
^{\sigma_\F(E_1)\sigma_\F(E_2)}\big| \quad\textrm{since $\chi|_{\mathrm{Cl}(K_\F/L)}=1$}\\
&=&\frac{1}{n}\sum_{E_1}\overline{\chi}(E_1)\log\big|N_{K_{\F}/L}(g_\F(C_0)^{n})
^{\sigma_\F(E_1)}\big|\Big(\sum_{E_2}\overline{\chi}(E_2)\Big)
\\
&=&0,
\end{eqnarray*}
because $N_{K_{\F}/L}(g_\F(C_0)^{n})\in L'$ and $\chi|_{\mathrm{Cl}(K_\F/L')}\neq 1$.
This is a contradiction, and so $L'=L$.
\par
Since $L'$ is an abelian extension of $K$ and 
\begin{equation*}
N_{K_{\F}/L}(g_\F(C_0)^{n})^{\sigma_\F(C)}=N_{K_{\F}/L}(g_\F(C)^{n})\quad\textrm{for $C\in\mathrm{Cl}(\F)$},
\end{equation*}
we conclude that $L=L'=K(N_{K_{\F}/L}(g_\F(C)^{n}))$ as desired.
\end{proof}

\begin{remark}
If $\mathfrak{f}=\mathfrak{p}^n$ is a power of a prime ideal $\mathfrak{p}$ of $K$, then the assumption $(\ref{assumption})$ is always satisfied since $| \mathbf{h}_{L,\F}|\leq 1$.

\end{remark}

Now, consider the case where $L=K_\F$.
One can readily show that
\begin{equation*}
\mathbf{h}_{K_\F,\F}=\{\textrm{a prime ideal factor $\mathfrak{p}$ of $\F$} ~|~ 
\textrm{$|\mathbf{G}_\mathfrak{p}|=1$ or $2$}\},
\end{equation*}
and hence \cite[Theorem 4.6]{K-Y} is a special case of Theorem \ref{singular value} for $L=K_\F$ as follows:

\begin{corollary}
Let $\mathfrak{f}=\prod_\mathfrak{p}\mathfrak{p}^{e_\mathfrak{p}}$ be  a nonzero proper integral ideal of $K$.
Assume that $K_\F\neq K_{\F\mathfrak{p}^{-e_\mathfrak{p}}}$ for every prime ideal factor $\mathfrak{p}$ of $\mathfrak{f}$, and 
\begin{equation*}
\sum_{\mathfrak{p}\in \mathbf{h}_{K_\F,\F}}\frac{1}{\phi(\mathfrak{p}^{e_\mathfrak{p}})}\leq\frac{1}{2}.
\end{equation*}
Then for any nonzero integer $n$ and $C\in\mathrm{Cl}(\F)$, we have
\begin{equation*}
K_\F=K(g_\F(C)^n).
\end{equation*}

\end{corollary}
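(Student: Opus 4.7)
The plan is to deduce the corollary by specializing Theorem \ref{singular value} to $L=K_{\F}$. In that case the norm $N_{K_{\F}/L}$ is the identity map, so the conclusion of Theorem \ref{singular value} already reads $K_{\F}=K(g_{\F}(C)^{n})$, which is precisely the corollary's assertion. It thus suffices to verify that the assumptions of the corollary imply the three hypotheses of Theorem \ref{singular value} for this choice of $L$.

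First, I would check $L=K_{\F}\not\subset H_K$. If the containment held, then $K_{\F}=H_K$ since $H_K\subset K_{\F}$ always; combined with $H_K\subset K_{\F\mathfrak{p}^{-e_{\mathfrak{p}}}}\subset K_{\F}$ this would force $K_{\F}=K_{\F\mathfrak{p}^{-e_{\mathfrak{p}}}}$ for every prime ideal factor $\mathfrak{p}$ of $\F$, contradicting the corollary's first assumption. The hypothesis $L\not\subset K_{\F\mathfrak{p}^{-e_{\mathfrak{p}}}}$ for every prime ideal factor $\mathfrak{p}$ of $\F$ in Theorem \ref{singular value} is exactly the same first assumption.

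Next, I would confirm the identification $\mathbf{h}_{K_{\F},\F}=\{\mathfrak{p}\,|\,\F ~:~ |\mathbf{G}_{\mathfrak{p}}|=1\text{ or }2\}$ displayed just before the corollary. Since $LH_K=K_{\F}$, the extension degree $[K_{\F}:LH_K]=1$ and therefore $\mathrm{ord}_{\nu}([K_{\F}:LH_K])=0$ for every rational prime $\nu$. Hence $\mathfrak{p}\in\mathbf{h}_{K_{\F},\F}$ precisely when no rational prime $\nu_{\mathfrak{p}}$ satisfies $\mathrm{ord}_{\nu_{\mathfrak{p}}}(|\mathbf{G}_{\mathfrak{p}}|)>i_{\mathfrak{p}}$; by the definition of $i_{\mathfrak{p}}$, this rules out any odd prime divisor of $|\mathbf{G}_{\mathfrak{p}}|$ (take $\nu_{\mathfrak{p}}$ odd) as well as $4\,|\,|\mathbf{G}_{\mathfrak{p}}|$ (take $\nu_{\mathfrak{p}}=2$), leaving $|\mathbf{G}_{\mathfrak{p}}|\in\{1,2\}$.

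Finally, I would translate the sum condition. With $L=K_{\F}$ and $K_{\F\mathfrak{p}^{-e_{\mathfrak{p}}}}\subset K_{\F}$, the intersection $L\cap K_{\F\mathfrak{p}^{-e_{\mathfrak{p}}}}$ equals $K_{\F\mathfrak{p}^{-e_{\mathfrak{p}}}}$, so the theorem's sum reads $\sum 1/[K_{\F}:K_{\F\mathfrak{p}^{-e_{\mathfrak{p}}}}]\leq 1/2$. The class-number formula $|\mathrm{Cl}(\mathfrak{a})|=h_K\phi(\mathfrak{a})\omega(\mathfrak{a})/\omega_K$, combined with the surjectivity of $\mathrm{Cl}(\F)\twoheadrightarrow\mathrm{Cl}(\F\mathfrak{p}^{-e_{\mathfrak{p}}})$, yields $[K_{\F}:K_{\F\mathfrak{p}^{-e_{\mathfrak{p}}}}]=\phi(\mathfrak{p}^{e_{\mathfrak{p}}})\,\omega(\F)/\omega(\F\mathfrak{p}^{-e_{\mathfrak{p}}})$. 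The main obstacle will be to show that under the corollary's first assumption together with $|\mathbf{G}_{\mathfrak{p}}|\in\{1,2\}$, the ratio $\omega(\F)/\omega(\F\mathfrak{p}^{-e_{\mathfrak{p}}})$ equals $1$ for each $\mathfrak{p}\in\mathbf{h}_{K_{\F},\F}$, so that the above index collapses to $\phi(\mathfrak{p}^{e_{\mathfrak{p}}})$; the corollary's bound $\sum 1/\phi(\mathfrak{p}^{e_{\mathfrak{p}}})\leq 1/2$ then becomes exactly $(\ref{assumption})$ and Theorem \ref{singular value} delivers the conclusion.
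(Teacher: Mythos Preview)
The paper's own proof is simply the citation ``See \cite[Theorem 4.6]{K-Y}''; it does not carry out the specialization of Theorem~\ref{singular value} to $L=K_{\F}$, even though the preceding text asserts that the corollary arises this way. Your route is therefore different from the paper's, but it is the natural one, and your verifications that $K_{\F}\not\subset H_K$, that $K_{\F}\not\subset K_{\F\mathfrak{p}^{-e_{\mathfrak{p}}}}$, and that $\mathbf{h}_{K_{\F},\F}=\{\mathfrak{p}:|\mathbf{G}_{\mathfrak{p}}|\in\{1,2\}\}$ are correct.

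The gap is in your last step. Your claim that the first assumption together with $|\mathbf{G}_{\mathfrak{p}}|\in\{1,2\}$ forces $\omega(\F)/\omega(\F\mathfrak{p}^{-e_{\mathfrak{p}}})=1$ is false as stated. For instance, take $K=\mathbb{Q}(\sqrt{-11})$ (so $2$ is inert and $\omega_K=2$), let $\mathfrak{p}_0$ be a prime above $5$ (which splits), and set $\F=(2)\mathfrak{p}_0$. Then $\mathbf{h}_{K_{\F},\F}=\{\mathfrak{p}_0\}$ since $|\mathbf{G}_{(2)}|=3$ and $|\mathbf{G}_{\mathfrak{p}_0}|=2$; both conditions $K_{\F}\neq K_{\F\mathfrak{p}^{-e_{\mathfrak{p}}}}$ hold, and the corollary's sum is $1/\phi(\mathfrak{p}_0)=1/4\leq 1/2$. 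Yet $\omega(\F)=1$ while $\omega(\F\mathfrak{p}_0^{-1})=\omega((2))=2$, so the ratio is $1/2$ and $[K_{\F}:K_{(2)}]=2\neq 4=\phi(\mathfrak{p}_0)$. Since $\omega(\F)\leq\omega(\F\mathfrak{p}^{-e_{\mathfrak{p}}})$ always, the inequality between the two sums goes the wrong way in general, and the corollary's bound does not formally imply \eqref{assumption} via your identity.

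Your overall strategy can still be rescued, but by a different split: when $|\mathbf{h}_{K_{\F},\F}|\leq 1$ the theorem already says \eqref{assumption} is automatic, so one only needs the $\omega$-ratio to equal $1$ when $|\mathbf{h}_{K_{\F},\F}|\geq 2$. A short case analysis (for $\omega_K=2$: the ratio drops only if $\F\mathfrak{p}^{-e_{\mathfrak{p}}}\mid(2)$, and then one checks that either only one prime of $\F$ lies in $\mathbf{h}_{K_{\F},\F}$ or some term $1/\phi$ in the sum is already $\geq 1/2$; the cases $\omega_K=4,6$ are similar) shows that under the corollary's hypotheses and $|\mathbf{h}_{K_{\F},\F}|\geq 2$ the ratio is indeed $1$. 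That argument, however, is what is missing from your proposal.
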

\begin{proof}
See \cite[Theorem 4.6]{K-Y}.
\end{proof}

\begin{remark}
One can show without difficulty that $|\mathbf{G}_\mathfrak{p}|=1$ or $2$ if and only if $\mathfrak{p}^{e_\mathfrak{p}}$ satisfies one of the following conditions (\cite[Lemma 4.4]{K-Y}):
\begin{itemize}
\item[] \textbf{Case 1} : $K\neq \mathbb{Q}(\sqrt{-1}), \mathbb{Q}(\sqrt{-3})$ 
\begin{itemize}
\item[$\bullet$] $2$ is not inert in $K$, $\mathfrak{p}$ is lying over $2$ and $e_\mathfrak{p}=1,2$ or $3$.
\item[$\bullet$] $3$ is not inert in $K$, $\mathfrak{p}$ is lying over $3$ and $e_\mathfrak{p}=1$.
\item[$\bullet$] $5$ is not inert in $K$, $\mathfrak{p}$ is lying over $5$ and $e_\mathfrak{p}=1$.
\end{itemize}
\item[] \textbf{Case 2} : $K=\mathbb{Q}(\sqrt{-1})$ 
\begin{itemize}
\item[$\bullet$] $\mathfrak{p}$ is lying over $2$ and $e_\mathfrak{p}=1,2,3$ or $4$.
\item[$\bullet$] $\mathfrak{p}$ is lying over $3$ and $e_\mathfrak{p}=1$.
\item[$\bullet$] $\mathfrak{p}$ is lying over $5$ and $e_\mathfrak{p}=1$.
\end{itemize}
\item[] \textbf{Case 3} : $K=\mathbb{Q}(\sqrt{-3})$ 
\begin{itemize}
\item[$\bullet$] $\mathfrak{p}$ is lying over $2$ and $e_\mathfrak{p}=1$ or $2$.
\item[$\bullet$] $\mathfrak{p}$ is lying over $3$ and $e_\mathfrak{p}=1$ or $2$.
\item[$\bullet$] $\mathfrak{p}$ is lying over $7$ and $e_\mathfrak{p}=1$.
\item[$\bullet$] $\mathfrak{p}$ is lying over $13$ and $e_\mathfrak{p}=1$.
\end{itemize}
\end{itemize}
\end{remark}

\begin{example}\label{example}
Let $K=\mathbb{Q}(\sqrt{-11})$ and $L$ be a finite abelian extension of $K$ such that $K\subsetneq L\subset K_{\F}$ for some nonzero proper integral ideal $\F$ of $K$.
Then $H_K=K$.
\begin{itemize}
\item[(i)] Let $\mathfrak{f}=5\mathcal{O}_K$. 
Then $\F$ is a prime ideal.
Since $|\mathbf{h}_{L,\F}|\leq 1$, we conclude by Theorem \ref{singular value} 
\begin{equation*}
L=K(N_{K_{\F}/L}(g_\F(C)^{n}))
\end{equation*}
for any nonzero integer $n$ and $C\in\mathrm{Cl}(\mathfrak{f})$.
\item[(ii)]
Let $\mathfrak{f}=22\mathcal{O}_K$. 
Then $\mathfrak{f}=\mathfrak{p}_1\mathfrak{p}_2^2$ with prime ideals $\mathfrak{p}_1=2\mathcal{O}_K$ and $\mathfrak{p}_2=\sqrt{-11}\mathcal{O}_K$.
Observe that $|\mathbf{G}_{\mathfrak{p}_1}|=3$, $|\mathbf{G}_{\mathfrak{p}_2}|=55$ and $[K_\F :K]=165$.
Hence $[K_\F :L]=1,2,3,5,11,15,33$ or $55$.
Since
\begin{equation*}
\begin{array}{rcll}
\mathrm{ord}_{3}(|\mathbf{G}_{\mathfrak{p}_1}|)&>&\mathrm{ord}_{3}([K_{\F}:L])&\quad\textrm{if $[K_\F :L]=1,5,11,15,55$},\\
\mathrm{ord}_{5}(|\mathbf{G}_{\mathfrak{p}_2}|)&>&\mathrm{ord}_{5}([K_{\F}:L])&\quad\textrm{if $[K_\F :L]=3,33$},\\
\mathrm{ord}_{11}(|\mathbf{G}_{\mathfrak{p}_2}|)&>&\mathrm{ord}_{11}([K_{\F}:L])&\quad\textrm{if $[K_\F :L]=15$},
\end{array}
\end{equation*}
we get $|\mathbf{h}_{L,\F}|= 0$ or $1$ for any case.
Therefore, it follows from Theorem \ref{singular value} that
\begin{equation*}
L=K(N_{K_{\F}/L}(g_\F(C)^{n}))
\end{equation*}
for any nonzero integer $n$ and $C\in\mathrm{Cl}(\mathfrak{f})$.
\end{itemize}
\end{example}

\bibliographystyle{amsplain}

\address{
Ja Kyung Koo\\
Department of Mathematical Sciences \\
KAIST \\
Daejeon 34141 \\
Republic of Korea} {jkkoo@math.kaist.ac.kr}
\address{
Dong Sung Yoon\\
Department of Mathematical Sciences \\
Ulsan National Institute of Science and Technology \\
Ulsan 44919 \\
Republic of Korea} {dsyoon@unist.ac.kr}

\end{document}